\numberwithin{figure}{section}
\theoremstyle{plain}
\newcommand{\thistheoremname}{}
\newtheorem*{genericthm*}{\thistheoremname}
\newenvironment{namedthm*}[1]
{\renewcommand{\thistheoremname}{#1}%
	\begin{genericthm*}}
	{\end{genericthm*}}
\newtheorem{thm}{Theorem}[section]
\newtheorem{lemma}[thm]{Lemma}
\newtheorem{claim}[thm]{Claim}
\newtheorem{corol}[thm]{Corollary}
\theoremstyle{definition}
\newtheorem{definition}[thm]{Definition}
\newtheorem{rem}[thm]{Remark}
\newtheorem*{thm*}{Theorem}
\newtheorem*{thm2*}{Informal Theorem}
\newtheorem*{definition*}{Definition}
\newtheorem*{lemma*}{Lemma}
\newtheorem*{statement*}{Claim}
\newtheorem*{corol*}{Corollary}
\newtheorem*{question*}{Question A}
\newtheorem{question}{Question}
\def\q#1.{{\bf #1.}}
\newcommand{\diam}{\operatorname{diam}}
\long\def\comment#1{}
\author{Anna Erschler}
\address{A.E.:  
C.N.R.S., LPSM, Sorbonne University, CNRS, Paris, France}
\email{anna.erschler@sorbonne-universite.fr }
\author{Ivan Mitrofanov}
\address{I.M.: Geneva University}
\email{phortim@yandex.ru }
\title{Finite dimensional amenable groups}
\date{21.08.2025}
\subjclass[2010]{05C12, 20F65, 20F67,20F69, 20F18}
\keywords{Quasi-isometric invariants,  Assouad-Nagata dimension,
asymptotic dimension, amenable groups, solvable groups, elementary amenable groups, Shalom's property $H_{\rm FD}$, torsion groups, simple groups.}
\begin{document}
\begin{abstract}
We show that an amenable group of finite Assouad-Nagata dimension satisfies
the property $H_{FD}$ of Shalom. Such infinite groups are known to admit a virtual homomorphism onto $\mathbb{Z}$, and thus our result implies that an amenable group of finite $AN$-dimension cannot be a simple group. We can also conclude that an amenable group of finite $AN$-dimension cannot be a torsion group.
Our proof is based on new estimates of diameters of F{\o}lner couples.
 We  prove that any amenable group of finite $AN$-dimension  admits F{\o}lner couples inside balls of linear diameter and more generally estimate the radius of the balls containing F{\o}lner couples in groups of finite asymptotic dimension.
 This result strengthens the result of Nowak about diameters of F{\o}lner sets.
\end{abstract}
\maketitle

\section{Introduction and statement of the main result}

This paper is intended to be the first in a series of papers that study the diameters  of F{\o}lner sets and their relation to other invariants of groups, including those related to notions of dimension.

Given a number $r>0$, a collection of subsets of a metric space is said to be {\it $r$-disjoint} if the distance between any two distinct subsets in the collection is greater than $r$. 

\begin{definition} \label{def:nagata} {\it Assouad-Nagata dimension} of a metric space is defined as follows. 
Let $M$ be a metric space, let  $d$  be a non-negative integer and let $K\geqslant 1$.
Suppose that for any $r>1$ there exists a partition of $M$ into a collection of subsets colored with $d+1$ colors such that diameters of sets are at most  $K r$ and such that for each color the collection of sets of this color is $r$-disjoint.
In this case, 
we say that the Assouad-Nagata dimension
of $M$ is at most $d$. 
	\end{definition}
We will also refer to  Assouad-Nagata dimension as $AN$-dimension.  Another terminology for spaces of finite $AN$-dimension is to say that the spaces have the Higson
property, see e.g \cite{nowak}.
Given a metric space $M$, we take the minimum of $d$ such that the $AN$-dimension of $M$ is at most $d$, and we say that the $AN$-dimension of $M$ is, by definition, equal to this minimum (a nonnegative integer or infinity).

There are several equivalent definitions of $AN$-dimension, see \cite{BellDranishnikovBedlewo}.
This notion is essentially due to Nagata in his works in the 50s and 
	goes back to his paper \cite{Nagata58}. Assouad has studied this notion in the early eighties 
	and the term {\it Nagata dimension} is used in  his paper \cite{Assouad82}. 
	If a space is uniformly discrete, as it happens for finitely generated groups endowed with word metric,  only large scales
	and global properties matter; the notion coincides in this case with {\it linearly  controlled metric dimension}, and is also called
	{\it linearly controlled asymptotic dimension}. For uniformly discrete
	spaces $AN$-dimension is a quasi-isometric invariant.

We recall in the next section various	 
 spaces and groups which are known to have finite $AN$-dimension. 
 There are many such groups both among amenable ones, as well as among non-amenable ones.
 Virtually nilpotent groups have finite $AN$-dimension, and there are many solvable groups of exponential growth with finite $AN$-dimension.

A group has  the property $H_{\rm FD}$ of Shalom
if every unitary $G$-representation $\pi$ with non-zero reduced cohomology
$\bar{H}^1(G,\pi)$ admits a finite-dimensional sub-representation.
See Subsection \ref{subsection:Shalom} for more details on this definition.
It is known that an infinite finitely generated amenable  group with the property $H_{\rm FD}$ admits a  finite index subgroup with an infinite abelianization; and hence such subgroup admits a surjective homomorphism to $\mathbb{Z}$, this is proven by Shalom in   Theorem 4.3.1 in \cite{shalom}.

The main result of this paper is
\begin{namedthm*}{Theorem A}
Each finitely generated amenable group of finite $AN$-dimension satisfies property $H_{\rm FD}$ of Shalom. In particular, each such group admits a virtual surjective homomorphism to $\mathbb{Z}$.
\end{namedthm*}

We mention here that even for very basic constructions it might be difficult to check whether the group has property $H_{\rm FD}$; for more on this, we refer again to Subsection \ref{subsection:Shalom}.

Theorem A implies the following.
\begin{corol}\label{corol:simplegroups}
There are no simple groups among finitely generated infinite amenable groups of finite $AN$-dimension.
\end{corol}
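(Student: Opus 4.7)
The plan is to deduce the corollary directly from Theorem A together with the standard fact that an infinite simple group admits no proper subgroups of finite index.

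First, I would record that group-theoretic fact. Let $G$ be finitely generated, infinite, and simple, and let $H\leqslant G$ be a subgroup of finite index. The normal core $N=\bigcap_{g\in G} gHg^{-1}$ is normal in $G$ and still has finite index; since $G$ is infinite, $N\neq\{e\}$. Simplicity of $G$ then forces $N=G$, and hence $H=G$. In particular, the only finite index subgroup of such $G$ is $G$ itself.

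Next, suppose for contradiction that $G$ is a finitely generated infinite amenable simple group of finite $AN$-dimension. By Theorem A, $G$ has Shalom's property $H_{\mathrm{FD}}$. By Shalom's theorem (Theorem 4.3.1 in \cite{shalom}, cited in the excerpt), $G$ then possesses a finite index subgroup $H$ with infinite abelianization $H/[H,H]$. By the previous paragraph, $H=G$, so $G/[G,G]$ is infinite. But $[G,G]$ is a normal subgroup of $G$, so by simplicity either $[G,G]=G$ or $[G,G]=\{e\}$. In the first case $G/[G,G]$ would be trivial, contradicting its infiniteness. In the second case $G$ is abelian, and any finitely generated abelian simple group is cyclic of prime order, hence finite, contradicting the infiniteness of $G$.

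Since the corollary is a formal consequence of Theorem A and elementary facts about simple groups, there is no substantive obstacle in the argument; the real content of the corollary lies entirely in Theorem A itself, whose proof via F\o{}lner couple diameter estimates is the subject of the remainder of the paper.
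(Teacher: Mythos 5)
Your proposal is correct and follows exactly the route the paper intends: apply Theorem A to get $H_{\rm FD}$, invoke Shalom's Theorem 4.3.1 to obtain a finite index subgroup with infinite abelianization, and rule this out for an infinite simple group via the normal core and commutator subgroup arguments. The paper leaves these elementary group-theoretic details implicit, so your write-up simply makes explicit what the authors take for granted.
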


Note that the assumption that the group is amenable in Corollary \ref{corol:simplegroups}
above is essential. Indeed, observe that  simple groups of Burger and Mozes 
\cite{burgermozes} are quasi-isometric to a product of two trees and thus
have finite $AN$-dimension (equal to two).

Theorem A also implies
\begin{corol}\label{corol:torsion}
There are no torsion groups among finitely generated infinite amenable groups of finite $AN$-dimension.
\end{corol}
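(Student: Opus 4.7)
The plan is to deduce Corollary \ref{corol:torsion} directly from Theorem A together with the statement of Shalom's theorem recalled just before Theorem A, and then use the very rigid structure of torsion groups to reach a contradiction.

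Suppose for contradiction that $G$ is a finitely generated infinite amenable torsion group of finite $AN$-dimension. By Theorem A, $G$ satisfies property $H_{\rm FD}$. Since $G$ is infinite, finitely generated and amenable, Shalom's theorem (Theorem 4.3.1 in \cite{shalom}, as quoted just before Theorem A) supplies a finite index subgroup $H \leqslant G$ whose abelianization $H/[H,H]$ is infinite; and because $H$ is itself finitely generated, the structure theorem for finitely generated abelian groups guarantees a surjective homomorphism $\varphi : H \twoheadrightarrow \mathbb{Z}$.

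Now I use the torsion hypothesis. Every element of $G$ has finite order, so the same is true of the subgroup $H$, and hence of its quotient $H/[H,H]$; in particular every element in the image of $\varphi$ has finite order. But $\mathbb{Z}$ has no nontrivial elements of finite order, so $\varphi$ must be the zero map, contradicting surjectivity. This contradiction shows that no such $G$ exists.

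The argument is essentially a one-step reduction, and the only mild subtlety worth flagging is the passage from ``infinite abelianization'' to an actual surjection to $\mathbb{Z}$: this uses that $H$, being a finite index subgroup of the finitely generated group $G$, is itself finitely generated, so its abelianization is a finitely generated abelian group and therefore splits off a free part as soon as it is infinite. There is no real obstacle here; all the work is contained in Theorem A and in Shalom's theorem.
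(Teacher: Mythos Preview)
Your proof is correct and follows exactly the route the paper indicates: the paper simply says ``Theorem A also implies'' this corollary, relying on the fact (stated just before Theorem A) that an infinite finitely generated amenable group with $H_{\rm FD}$ admits a virtual surjection onto $\mathbb{Z}$, which is impossible for a torsion group. You have correctly spelled out the one minor point the paper leaves implicit, namely that the finite index subgroup is finitely generated so its infinite abelianization has a $\mathbb{Z}$ summand.
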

To our knowledge, the situation in the non-amenable case is not known: see Question \ref{question:burnside} in Section \ref{sec:questions}.

Now we comment on possible developments of Theorem A. 
The main step in the proof of the polynomial growth theorem is to prove that a group of polynomial growth admits a finite index normal subgroup which maps onto $\mathbb{Z}$, see more on this in Subsection \ref{subsection:growth}.
If we have a group of  superpolynomial growth of finite $AN$-dimension, then 
conjecturally (see Claim \ref{claim:fromGrigorchuk}) such group has exponential growth. In contrast with the polynomial growth case, even if we know that the group admits a surjective homomorphism to $\mathbb{Z}$, it seems to be a difficult task to control the kernel of such a homomorphism and to get information about the algebraic structure of our group. Still, in view of the claim of our Theorem, we are inclined to ask
\begin{question*} 
Is every finitely generated amenable group of finite $AN$-dimension elementary amenable?
\end{question*}

A positive answer to this question would be a wide extension of the polynomial growth theorem of \cite{gromovpolynomial}.
Indeed, if one knows that the group is of polynomial growth and is elementary amenable, an elementary argument proves that if the group is not virtually nilpotent,
then its growth would be exponential, see \cite{chou}, Theorem 3.2.
That is the theorem that proves that the growth of an elementary amenable group is either polynomial or exponential.

\bigskip

The paper has the following structure.
In the next section, we discuss the definition of Shalom's property, of invariants of amenable groups including sizes of  F{\o}lner sets and F{\o}lner couples, and some background on asymptotic dimension.

In Section 3 we formulate and prove Theorem \ref{pr:dimtoisodiam},
which is more general than the first claim of Theorem A.
Theorem \ref{pr:dimtoisodiam} bounds the diameters of F{\o}lner couples for groups of finite asymptotic dimension from above. 
This extends a result of Nowak, who proved similar upper bounds for diameters of F{\o}lner sets. At the end of this section, we also discuss various smallness conditions of amenable groups, and explain that finiteness of $AN$-dimension, which can be also considered as such a smallness condition, implies other known smallness conditions. 
A Lemma 4.5 in
\cite{erschlerzheng} 
together with a well-known connection between $l_2$-profile and F{\o}lner couples
allows us to deduce the cautiousness property of random walks under the assumption of the existence of controlled F{\o}lner couples. 
We explain the cautiousness condition in Section 3. This cautiousness condition is a sufficient condition for property $H_{\rm FD}$, see \cite{erschlerozawa}.
Theorem A follows therefore from
Theorem \ref{pr:dimtoisodiam}.

In the last section, we discuss several open questions about groups of finite dimension. We discuss current knowledge and relevant open problems related to Question A of the introduction.

\vskip12pt
 {\bf Acknowledgements.} 
This work of the first-named author has received funding from  
ANR PLAGE, 
the Marvin V. and Beverly J. Mielke Fund and EPSRC grant no EP/R014604/1.34;
she thanks the Isaac Newton Institute for Mathematical Sciences, Cambridge, for support and hospitality during the programme "Operators, Graphs, Groups". 
Second-named author 
 is supported by SNSF grant TMAG-2\_216487\slash1.

\section{Definitions and background}
\subsection{$AN$-dimension and asymptotic dimension of metric spaces and groups }

Given a finitely generated group $G$ and its finite symmetric generating set $S$, we consider the word metric $d_S$, 
 which is invariant under multiplication from the left.
 We recall that  
 the word length $l_S$ is defined as  the minimum of $l$, such that
 $g=s_1 s_2 \dots s_l$ for some $s_1, s_2, \dots s_l\in S$ and  $d_S$ is defined as 
 $d_S(h, g)=l_S(h^{-1}g )$.

Different sets of generators give rise to bi-Lipschitz equivalent (and in particular quasi-isometric) word metrics. 
When the choice of the generating set $S$ is not important, we omit $S$ in the notation for
$l_S$ and $d_S$. When we speak about $AN$-dimension of a group, we consider the group as a metric space endowed with word metric. 

Spaces with doubling property \cite{LangSchlichenmaier} are known to have finite $AN$-dimension. Thus, all virtually nilpotent groups have this property. 
A more general class of groups of finite $AN$-dimension is that of polycyclic groups \cite{HigesPeng}, Corollary 5.6.
Another class
of amenable groups 
of finite $AN$-dimension is that of 
wreath products of groups of linear growth with finite groups \cite{BrodskiyDydakLang}.

Graphs and groups that admit quasi-isometric embedding into finite products of trees 
are known to have finite $AN$-dimension, see \cite{BuyaloDranishnikovSchroeder}. This class contains many non-amenable graphs, for example all hyperbolic groups and $\delta$-hyperbolic spaces that are "doubling in the small" \cite{LangSchlichenmaier},
but also various examples of amenable groups, including solvable Baumslag-Solitar groups, see e.g.
example 7.3 in \cite{nowak}.

Now recall a more general definition of {\it asymptotic dimension}. 
This definition is given by Gromov in \cite{gromovasymptotic}, and the theory was then developed in the works of Dranishikov and other researchers (see e.g.
\cite{BellDranishnikovBedlewo} and references therein). As we have mentioned speaking about the definitions of $AN$-dimension, there are also several equivalent ways to define this notion, and we refer again to \cite{BellDranishnikovBedlewo}.

The definition below is formulated analogously to that of $AN$-dimension that we have discussed in the introduction, but the assumption that the sets have linear diameters  in $r$ is replaced by a weaker condition about a uniform upper bound for these diameters:
\begin{definition}
Let $M$ be a metric space and $d $ be a non-negative integer.
Suppose that there exists $f_M:\mathbb{R}_+\to\mathbb{R}_+$ such that for any $r>1$
there exists a partition of $M$ into sets colored into $d+1$ colors such that diameters of sets are $\leqslant f_M(r)$, and for any color the collection of sets of this color is $r$-disjoint. 
Then we say that $M$ has {\it asymptotic dimension} at most $d$. 
\end{definition}

\begin{claim}\label{rem:Hd} Let $G$ be a finitely generated group. 
	If for each  $d$ there exists   an infinite finitely generated group $H$, such that $H^d$ is a subgroup of $G$, then the 
	asymptotic dimension of $G$ 
	is infinite. 
\end{claim}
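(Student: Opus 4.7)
The plan is to reduce the claim to two standard ingredients about asymptotic dimension: (i) monotonicity under finitely generated subgroups, i.e.\ if $H$ is a finitely generated subgroup of a finitely generated group $G$, then $\asdim(H)\leqslant \asdim(G)$; and (ii) the lower bound $\asdim(H^d)\geqslant d$ for every infinite finitely generated group $H$ and every $d\geqslant 1$. Granting these, the claim is immediate: by hypothesis, for each $d$ there exists an infinite finitely generated $H=H_d$ with $H^d\leqslant G$, so $\asdim(G)\geqslant \asdim(H^d)\geqslant d$, and letting $d\to\infty$ gives $\asdim(G)=\infty$.

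For (i), the inclusion $(H,d_H)\hookrightarrow(G,d_G)$ is Lipschitz for the upper distortion bound (every word in the generators of $H$ is a word in the generators of $G$), and because $G$ is finitely generated every ball in $G$ is finite, forcing the inclusion also to satisfy a (possibly bad, but sufficient) lower coarse bound. This is enough for $(H,d_H)\hookrightarrow (G,d_G)$ to be a coarse embedding, and $\asdim$ is monotone under coarse embeddings.

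The substantive point is (ii). The plan is to produce a coarse embedding $\mathbb{N}^d\hookrightarrow H^d$ and invoke monotonicity together with $\asdim(\mathbb{N}^d)=d$. Since the Cayley graph of $H$ is connected, locally finite, and infinite, K\"onig's lemma furnishes an infinite geodesic ray $\gamma\colon \mathbb{N}\to H$ starting at the identity. Its $d$-fold product $\gamma^d\colon \mathbb{N}^d\to H^d$ is an isometric embedding for the $\ell^\infty$-product metrics on both sides, and the $\ell^\infty$-product of word metrics on $H$ is bi-Lipschitz equivalent to the word metric on the product group $H^d$ (with generating set the union of $d$ copies of a generating set of $H$, one per factor). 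Hence $\gamma^d$ is a quasi-isometric embedding and $\asdim(H^d)\geqslant \asdim(\mathbb{N}^d)=d$. The only mild subtlety is the verification that the product-group word metric is comparable to the coordinate-wise $\ell^\infty$ metric, which is a direct unwinding of the definitions; otherwise the argument is routine.
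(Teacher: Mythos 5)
Your proof is correct. Note that the paper itself states this claim without any proof, treating it as standard, so there is no argument of the authors to compare against; your write-up supplies exactly the usual two ingredients (monotonicity of $\asdim$ under coarsely embedded finitely generated subgroups, and the lower bound $\asdim(H^d)\geqslant d$ via a geodesic ray and a quasi-isometrically embedded copy of $\mathbb{N}^d$). The only step worth making explicit is that $\asdim(\mathbb{N}^d)=d$: the upper bound is monotonicity inside $\mathbb{Z}^d$, and the lower bound follows since $\mathbb{Z}^d$ is the union of $2^d$ isometric copies of orthants, so by the finite union theorem at least one (hence every) orthant has asymptotic dimension $d$.
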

For example, wreath product $\mathbb{Z} \wr \mathbb{Z}$ and many other solvable groups have infinite asymptotic dimension. 
Many solvable groups, such as, for example, (locally finite)-by-Abelian ones, have finite asymptotic dimension. 
For some groups, the definition of $AN$-dimension and asymptotic dimension differ. 
Inside amenable groups, one can witness this phenomenon, for example, for wreath products $\mathbb{Z}^d \wr B$, $d\ge 2$.  The asymptotic dimension of these groups is finite when $B$ is  finite, but $AN$-dimension is infinite \cite{BrodskiyDydakLang}.

\subsection{Shalom's property} \label{subsection:Shalom}
Shalom defines his property for discrete groups, and many of his results are under a stronger assumption of finite generation of groups. This will be the context of our paper.
Let $G$ be a finitely generated group,
and $\pi$ be a unitary representation
of $G$ on a Hilbert space $\mathcal{H}$.
A function $b : G \to \mathcal{H}$ is a {\it 1-cocycle} if
$b(gh) = b(g) + \pi_g b(h)$ for all $g, h$ in $G$.
The vector space of
1-cocycles is denoted as $Z^1(G, \pi)$.
We also recall that 
$b : G \to \mathcal{H}$
is a  {\it 1-coboundary} if there exists $v$ in
$\mathcal{H}$ such that 
$b(g) = v-\pi_g v$ 
for all $g \in G$.
We denote by $B^1(G, \pi)$ the subspace of 1-coboundaries.
We consider the closure of  $B^1(G, \pi)$ in the topology corresponding to 
 pointwise convergence for coboundaries, and denote it  by $\overline{B^1}(G, \pi)$.

The {\it first reduced cohomology group} of a representation $\pi$ is by definition the quotient space
$$
\overline{H^1}(G, \pi) = Z^1(G, \pi)/\overline{B^1}(G, \pi).
$$
See Chapter 3 of \cite{bekkadelaharpevalette} for an introduction to first reduced cohomology groups.

As we have mentioned in the introduction, a finitely generated group $G$ has the property $H_{\rm FD}$ if  any unitary representation $\pi$ with non-trivial
$\overline{H^1}(G, \pi)$ admits a finite-dimensional subrepresentation (see \cite{shalom}, definition on page 125).

This property  is stable under taking direct products, central extensions \cite{shalom} and moreover under taking FC-central extensions. Nilpotent groups have a stronger property $H_T$ which says that any unitary representation with non-trivial first reduced cohomology has a trivial subrepresentation
(see Thm 4.1.3 in \cite{shalom}). 
The property $H_{\rm FD}$ is stable under quasi-isometries inside the class of amenable groups. It is an open question whether $H_{FD}$ is also preserved by quasi-isometric embedding of
non-amenable groups, and even the question of Shalom whether a group which is 
quasi-isometric to a group with property T of Kazhdan satisfies $H_{FD}$ is open (see
\cite{shalom}, page 180).

All polycyclic groups have this property (Theorem 1.13 in \cite{shalom}), some without satisfying $H_T$.  
A one-dimensional lamplighter group $\mathbb{Z}\wr (\mathbb{Z}/p\mathbb{Z})$ has $H_{\rm FD}$ \cite{shalom}, $d$-dimensional lamplighter groups $\mathbb{Z}^d \wr \mathbb{Z} /p \mathbb{Z} =    \left( \mathbb{Z} /p \mathbb{Z} \right)^{\mathbb{Z}^d} \rtimes \mathbb{Z}^d$
, $d\ge 3$  violate $H_{\rm FD}$ \cite{brieusselzhengHFD}, and for $d=2$ it seems unknown whether lamplighter groups satisfy $H_{FD}$.
So even for very basic examples of solvable groups it might be difficult to check whether the group has $H_{FD}$. Shalom suggested that a finiteness of Hirsch number can characterize solvable groups with $H_{FD}$, but counterexamples in both directions are constructed in \cite{brieusselzhengHFD}.

If the group has $H_{\rm FD}$ property of 
Shalom, there are clearly three  possibilities. 
\begin{enumerate}
\item $G$ is an amenable group with $H_{\rm FD}$.
\item $G$ is a (non-amenable) group without any 
unitary represenations with non-trivial $\overline{H^1}$.
\item $G$ is non-amenable group, that admits some unitary reperesentations with non-trivial $\overline{H^1}$ (which admit finitely dimensional subrepresentation)
\end{enumerate}
The existence of a representation with non-trivial 
$\overline{H^1}(G, \pi)$ is well-known to be equivalent to the violation of property $T$ of Kazhdan (see e.g.
Section 3 of \cite{bekkadelaharpevalette}).
In particular, groups mentioned in (2) are exactly the class of groups with property $T$.
We mention that some of these groups, for example
hyperbolic groups with property $T$, have finite $AN$-dimension. We do not 
recall the notion of Kazhdan property T or hyperbolic groups.
Our paper is about amenable groups. We mention the information above
to give some perspective for questions about non-amenable groups with property $H_{\rm FD}$, mentioned in the introduction and in the last section of this paper.

\subsection{Random walks on groups and functions defined by such random walks}

Now we recall several functions related to random walks on groups.
Given a probability  measure $\mu$ on a group, the {\it random walk} $(G,\mu)$  is the Markov chain on $G$ with transition probabilities $P(x, xh)= \mu(h)$. 
We will assume that the random walks we mentioned are defined by measures $\mu$, which are symmetric ($\mu(g) = \mu(g^{-1})$), that the support  of $\mu$ is finite and generates $G$. The random walks defined by such measures are called {\it simple} random walks.
We do not necessarily assume that the support of the $\mu$ is the generating set $S$ that we use for the definition of the word metric.
We denote by $W_n$ the position  of the random 
walk at time $n$. We assume that the random walk 
starts at the identity, that is $W_0=e$.

\begin{definition} {\it Drift function} of a random walk $(G,\mu)$  measures the average distance to the origin after $n$ steps of a random walk:
$$
L(n) = E[l(W_n)]. 
$$

\end{definition}

\begin{definition}
A random walk is said to be {\it diffusive} if $L(n) \le C \sqrt{n}$ for some $C>0$ and all $n$.

\end{definition}

A similar notion is  cautiousness. In that definition, we do not search for an upper bound for the expected length $l(W_n)$, but a bound from below of the probability for $W_n$ to stay inside  a ball of small radius, that is, of radius $ \varepsilon (\sqrt{n})$:

\begin{definition}
A random walk $G(\mu)$ is said to be {\it cautious} if for all $\varepsilon>0$ there exists $p_{\varepsilon} >0$ and an infinite sequence $n_i$
such that for any $i$ the probability that $\max_{k: 1\le k \le n_i}l(W_{k}) \le \varepsilon \sqrt{n_i} $ is at least $p_{\varepsilon}$.
\end{definition}
This definition is given in \cite{erschlerozawa}, where in Corollary 2.5 it is shown that cautiousness is a sufficient condition for the property $H_{FD}$ of Shalom.

We will also speak about total cautiousness, when this property is required to hold for all $n$, rather than for a subsequence. 

\begin{definition}
We say that a random walk $(G,\mu)$ is {\it totally cautious} if for 
all $\varepsilon>0$ there exists $p_{\varepsilon}>0$ such that for all  $n \ge 1$ the probability that $\max_{1\le k \le n}l(W_{k}) \le \varepsilon \sqrt{n} $ is at least $p_{\varepsilon}$.

\end{definition}

Clearly, total cautiousness of a random walk implies cautiousness.

\subsection{Invariants of finitely generated amenable groups}

We recall the definition of F{\o}lner sets.
\begin{definition}
Let $G$ be a group, and $S$ be a finite generating set of $G$.
A sequence of finite subsets $F_n \subset G$ is said to be a {\it sequence of F{\o}lner sets} if
$$
 \frac{\# \partial_S F_n}{ \# F_n} \leqslant \frac{1}{n}.
$$
\end{definition}

Here $\partial_S F$ denotes the boundary of a finite set $F$, that is, the set of elements $x\in F$ such that $d_S(x, G\setminus F)=1$;
$\#$ denotes the cardinality of a set.

\begin{figure} 
\centering
\includegraphics[width=0.6\linewidth]
{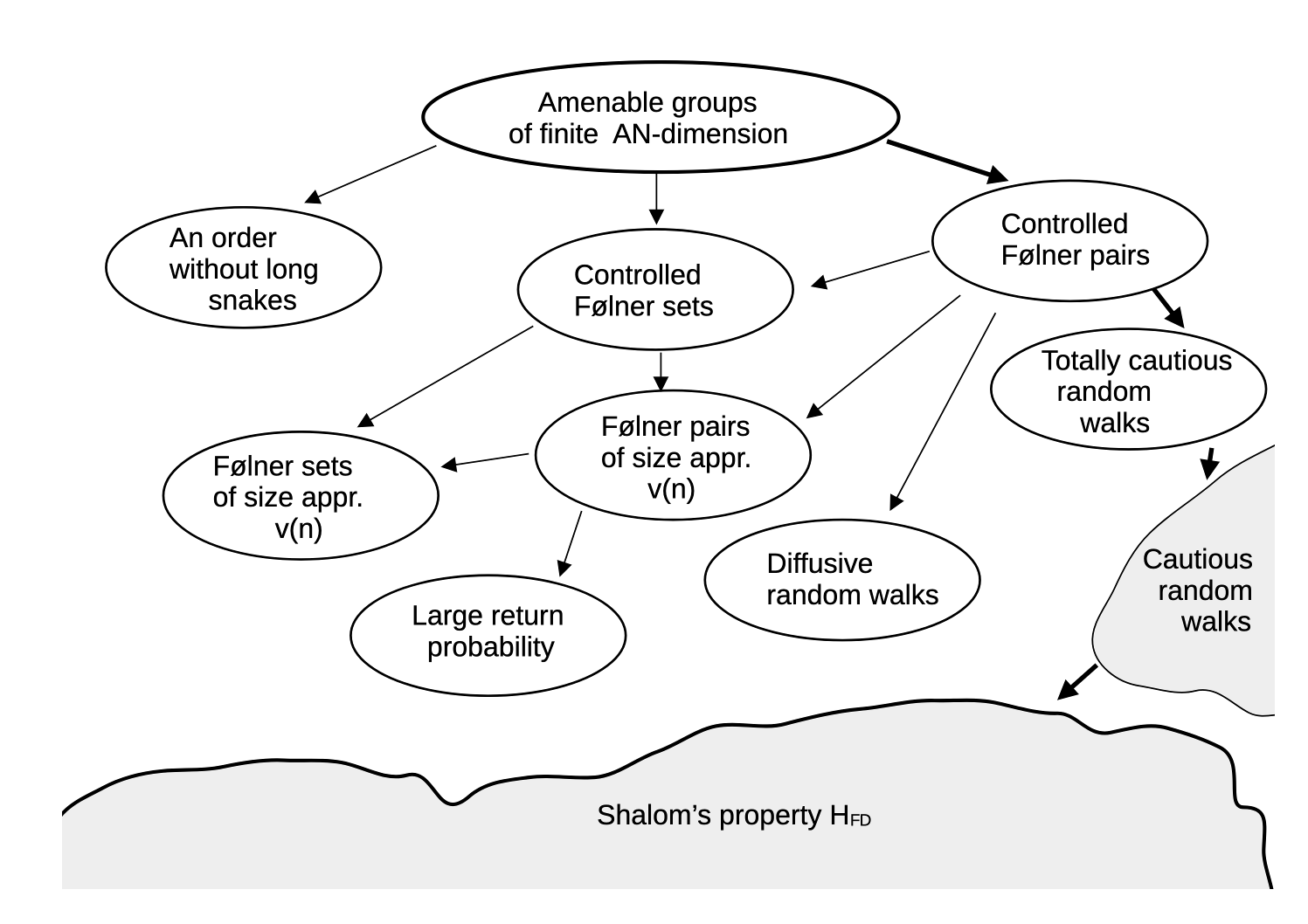}
\caption{\label{pic:smalnessconditions} Smallness conditions for amenable groups. Properties of amenable groups of finite $AN$-dimension.}
\end{figure}

We also recall the definition of F{\o}lner couples.
\begin{definition} \label{defn:couples}
    
A sequence $(F_n',F_n)$ of pairs of finite subsets
of $G$ with $F_n'\subset F_n$ is called {\it a sequence of F{\o}lner couples}
if there is a constant $C < \infty$ such that for all $n$, we have
 \begin{enumerate}
    \item $\#F_n \leqslant C\#F_n'$
    \item $d_S(F_n',G\setminus F_n) \geqslant n$
\end{enumerate}
\end{definition}

F{\o}lner couples are also called F{\o}lner pairs.
This notion was introduced by Coulhon, Grigoryan and Pittet in \cite{coulhongrigoryanpittet}, who observed
that such couples provide lower bounds for the return probability of random walks; see a discussion of anti-Faber-Krahn
inequality in the introduction of \cite{coulhongrigoryanpittet} and Theorem 4.8 in that paper.
In the case when the cardinality of F{\o}lner couples
is asymptotically equivalent to the size of optimal F{\o}lner sets, these bounds are asymptotically optimal,
see Theorem 4.8 and Example 4.2 for particular cases of this phenomenon.
A technical assumption 4.20 in \cite{coulhongrigoryanpittet} is not necessary, we do not give reference for further works  since later in this paper  we will have to consider only the very basic case  of exponential size F{\o}lner couples.
It is easy to see that finitely generated Abelian groups admit F{\o}lner couples, there are groups of exponential growth, such as lamplighters with an Abelian base group, that also have this property.

\begin{definition}
If F{\o}lner sets (or F{\o}lner couples) lie in balls of diameter linear in $n$, they
are called {\it controlled}.
\end{definition}

The notion of controlled F{\o}lner couples
was studied in \cite{tesseracontrolled}, see Definition 4.8
in \cite{tesseracontrolled}, where they are called controlled F{\o}lner pairs.

Now we recall the definition of $l_p$ profile inside the balls.
\begin{definition} Consider $p: 1 \le p \le 2$.
The $l_p$ profile inside the ball of radius $r$ is defined as
\[
\lambda_{S,p}(B(e, r)) = \inf \left\{ \frac{1}{2} \sum_{x, y \in G} \left|f(x) - f(xy)\right|^p \mu(y) : \operatorname{supp} f \subseteq B(e, r) : \|f\|_p = 1 \right\}.
\]
Here the infimum is taken over functions $f$ defined on the ball $B(e, r)$.
\end{definition}

The relation between F{\o}lner couples and $l_p$ profile (not necessarily inside the balls)  is that F{\o}lner couples provide an upper  bound
for this profile.
Indeed, in this case one considers the distance from $F'$ to $G\setminus F$ and this distance function provides this estimate.
Especial interest of this definition when $p=2$
explored in \cite{coulhongrigoryanpittet} and many subsequent papers, is that the existence
of F{\o}lner couples provides the upper bound for the $l_2$ profile, and this in turn, provides the lower estimate for the return probability of this random walk.

\subsection{Growth of groups; comments on the polynomial growth theorem.} \label{subsection:growth}

The growth function of $G$ is by definition the volume of a ball of radius $n$
in the Cayley graph of this group:
$$
v_{G,S}(n)=\# \{ g \in G: l_{S}(n) \le n \}
$$

The growth is clearly at most exponential, so the groups can be of {\it exponential growth}; of {\it polynomial growth}; and otherwise of {\it intermediate growth}. 

Polynomial growth theorem \cite{gromovpolynomial} states that the growth of $G$ is polynomial if and only if the group has a nilpotent finite index subgroup.
As we have mentioned in the introduction, the main step in the proof of this theorem is to prove that a group of polynomial growth admits a virtual homomorphism onto $\mathbb{Z}$: this is the way it was done in the original proof of Gromov \cites{gromovpolynomial}, this is the way it works in Kleiner's proof based on the finite dimension of the space of harmonic functions of linear growth, defined by a finitely supported measure on the group in question  \cite{kleinerpolynomial}, and this is also the argument of Ozawa in \cite{ozawapolynomial}, who realized the strategy suggested earlier by Shalom in proving the Shalom property using the assumption on the polynomial growth.
Indeed, after one knows the existence of a virtual homomorphism to $\mathbb{Z}$, then the proof of the theorem follows by induction on the degree of the polynomial growth: one checks that the kernel of the homomorphism is finitely generated and observes that its growth is at most $n^{d-1}$ if the growth of the initial group is bounded by $n^d$.
If we have a group of  superpolynomial growth of finite $AN$-dimension, then 
conjecturally (see Claim \ref{claim:fromGrigorchuk}) such group has exponential growth.

\section{The diameters of F{\o}lner couples
for groups of finite asymptotic dimensions}

A result of Nowak shows that amenable groups of finite $AN$-dimension admit
controlled F{\o}lner sets; more generally, his result bounds the diameter
of F{\o}lner sets in groups of finite asymptotic dimension.
We claim that moreover the diameters of F{\o}lner couples can also be bounded from above by the control function for the asymptotic dimension, 
that such F{\o}lner pairs are controlled in the case of finite $AN$-dimension. 
This can be used to show that they satisfy several other "smallness" conditions: the return probability of simple
random walks is large, these random walks are diffusive 
and totally cautious. 

\begin{thm}\label{pr:dimtoisodiam}
Assume that a finitely generated amenable group $G$ has finite asymptotic dimension $d$ with the corresponding control function
 $f(n)$.
Then there exists a sequence of F{\o}lner couples $(F'_n,F_n)$ with $C = d+1$ such that
$\diam(F_n) \leqslant f(2n)+2n$.
\end{thm}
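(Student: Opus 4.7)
The plan is to fix $n$ and invoke the finite asymptotic dimension hypothesis at scale $r = 2n$. This yields a partition $\mathcal{P}$ of $G$ into pieces colored by $\{0,1,\ldots,d\}$ with each piece of diameter at most $f(2n)$, and with pieces of the same color lying at pairwise distance strictly greater than $2n$. The idea is to build each F{\o}lner couple from a single piece of this partition: take $F'_n := P$ and $F_n := P^{+n}$, where $P^{+n} := \{x : d(x,P)\le n\}$ is the closed $n$-neighborhood of $P$. With this choice the separation and diameter conditions are automatic, since $d(P, G\setminus P^{+n}) \ge n+1 > n$ and $\diam(P^{+n}) \le \diam(P) + 2n \le f(2n) + 2n$.

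The real content of the proof is choosing a piece $P$ so that $|P^{+n}|$ is comparable to $|P|$. The plan is a double-counting argument against a very F{\o}lner set, in the spirit of Nowak's bound on F{\o}lner sets but tracking the thickening $P \mapsto P^{+n}$ simultaneously. Using amenability I would fix a small $\epsilon > 0$ and choose a finite $F \subseteq G$ whose deep interior $F^{-k} := \{x \in F : d(x, G\setminus F) > k\}$ at level $k := f(2n) + n$ satisfies $|F^{-k}| \ge (1-\epsilon)|F|$. Let $\mathcal{P}_0 \subseteq \mathcal{P}$ be the subcollection of pieces that meet $F^{-k}$. Since $\diam(P) \le f(2n)$, any $P \in \mathcal{P}_0$ actually sits inside $F^{-n}$, and consequently $P^{+n} \subseteq F$. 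For each fixed color, the $2n$-separation of same-color pieces makes the family $\{P^{+n} : P \in \mathcal{P}_0 \text{ of that color}\}$ pairwise disjoint inside $F$, so summing color by color yields
\[
\sum_{P \in \mathcal{P}_0} |P^{+n}| \;\le\; (d+1)\,|F|, \qquad \sum_{P \in \mathcal{P}_0} |P| \;\ge\; |F^{-k}| \;\ge\; (1-\epsilon)\,|F|.
\]
A weighted-average argument then produces at least one $P \in \mathcal{P}_0$ with $|P^{+n}|/|P| \le (d+1)/(1-\epsilon)$, giving the F{\o}lner couple at scale $n$.

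The only subtle point I expect is the constant $C$: the averaging delivers $(d+1)/(1-\epsilon)$ rather than exactly $d+1$, so to reach $C = d+1$ on the nose one would let $\epsilon = \epsilon_n \to 0$ slowly and invoke amenability to choose $F = F_n$ F{\o}lner enough at each scale. For any downstream use (the $\ell_2$-profile inequality that feeds into cautiousness and hence $H_{\rm FD}$) any uniform finite $C$ suffices, so this is bookkeeping rather than a genuine obstacle. No other step looks delicate: the asymptotic-dimension partition is the only external input, and the estimates on $P^{+n}$ are forced once one decides to track $P$ and $P^{+n}$ together against a suitably deep F{\o}lner set.
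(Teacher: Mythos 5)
Your proposal is correct, and it reaches the same conclusion by a genuinely different (more finitary) route than the paper. The paper fixes a right-invariant mean $\mu$ on $G$, selects a color whose union of pieces has mean at least $1/(d+1)$, and then proves a separate mass-transport-type lemma (Lemma \ref{lem:lemma1}): grouping the pairs $(A_\alpha, B(A_\alpha,n))$ into finitely many translation-equivalence classes, it shows that if every piece satisfied $\#B(A_\alpha,n) > (d+1)\#A_\alpha$ the chosen color would have mean strictly less than $1/(d+1)$, a contradiction. You instead run the same double counting against an explicit F{\o}lner set $F$ with deep interior, avoiding invariant means and the equivalence-class bookkeeping entirely; all your individual steps (pieces meeting $F^{-k}$ lie in $F^{-n}$, hence their $n$-neighborhoods lie in $F$ and are disjoint color by color; the two sums; the minimum-of-ratios bound) check out. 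The trade-off is exactly the one you identify: you get $C=(d+1)/(1-\epsilon)$ rather than $d+1$. One small correction to your patch, though: merely sending $\epsilon_n\to 0$ gives constants tending to $d+1$, not equal to it. To land on $C=d+1$ exactly you should add an integrality argument: since $\diam(P)\le f(2n)$, every piece has $\#P\le \#B(e,f(2n))=:N$, so if all ratios exceeded $d+1$ they would all be at least $d+1+1/N$, and taking $\epsilon$ with $(d+1)/(1-\epsilon)<d+1+1/N$ yields the contradiction. (The paper's own proof implicitly uses the same integrality to turn ``$>d+1$ for all $\alpha$'' into a uniform $\lambda>d+1$.) As you note, for every downstream application --- the $\ell_2$-profile bound, cautiousness, and hence $H_{\rm FD}$ --- any uniform constant suffices, so this is cosmetic.
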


\begin{proof}

We recall that by one of the equivalent definitions of amenability, the group $G$ is amenable if it admits a finitely additive, non-negatively valued,  $G$-invariant measure on all subsets, with total measure one.
One can speak about left-invariant means, which are invariant for the multiplication from the left;
about right-invariant means, which are invariant for the multiplication from the right, and about two-sided invariant means. 
Amenability is equivalent to each of these three conditions: existence of left-invariant means, existence of right-invariant means, or existence of two-sided invariant means.

The following elementary lemma might be reminiscent of the mass transportation argument for amenable groups.
\begin{lemma}\label{lem:lemma1}
Let $G$ be a finitely generated amenable group and let $\mu$ be a right-invariant mean on subsets of $G$. 
Consider a family of disjoint subsets $\{B_{\alpha}\}$ of $G$, $\alpha \in X$. Here the set $X$ is  countable or finite.
Assume that  
the diameters of  $B_{\alpha}$, for all $\alpha \in X$, are uniformly bounded.
Consider a family of subsets $A_{\alpha}$, such that $A_{\alpha} \subseteq B_{\alpha}$ for all $\alpha \in X$. Assume that there exists $\lambda>0$ such that for each $\alpha $ it holds $\#B_{\alpha} \geqslant \lambda \# A_{\alpha}$.
Then  the total mean of the union of $A_{\alpha}$ satisfies:
$$
\mu (\bigsqcup A_{\alpha}) \leqslant 1/\lambda.
$$

\end{lemma}

If the set $X$ is finite, the claim of the lemma is straightforward.  We would have
$$
\mu (\bigsqcup A_{\alpha}) = \sum_{\alpha \in X} \mu(A_\alpha) \le
1/\lambda \sum_{\alpha \in X} \mu(B_\alpha) = 1/\lambda 
\mu (\bigsqcup B_{\alpha}) \le  1/\lambda.
$$

We need to prove the lemma in the case when the $X$ is not necessarily finite, and 
$\mu (\bigsqcup A_{\alpha})$ and  $\mu (\bigsqcup B_{\alpha})$ are not necessarily equal to $\sum_{\alpha \in X} \mu(A_\alpha)$ and respectively
to $\sum_{\alpha \in X} \mu(B_\alpha)$.

\begin{proof}
We consider  family of subsets $A_{\alpha} \subset B_{\alpha} \subset G$ as in the formulation of the lemma.
We say  $\alpha \in X$ and $\beta \in X$ are equivalent if there exists
$g\in G$ such that $A_{\alpha}= gA_{\beta}$ and $B_{\alpha}=g B_{\beta}$.
Since the diameters of the sets $B_{\alpha}$ are uniformly bounded, the number of equivalence classes of our equivalence relation on $X$ is finite.
We enumerate  corresponding sets of indexes 
as $C_1,\dots, C_m$. We have $C_i \subset X$ and $\bigsqcup_{k = 1}^{m}C_k = X$.
We consider total means  of the union of $A_{\alpha}$ and $B_{\alpha}$ in a given class $C_i$:
$$
M^A_i = \mu (\bigsqcup_{ \alpha \in C_i} A_{\alpha}), \hspace{0.5cm}
M^B_i = \mu (\bigsqcup_{\alpha \in C_i} B_{\alpha}).
$$
 
Observe that the cardinality of the sets $A_{\alpha}$ is the same for all
 pairs $(A_{\alpha}, B_{\alpha})$ corresponding to the same equivalence class. We 
denote 
by $a_i$ the cardinality of $A_\alpha$ for $(A_{\alpha}, B_{\alpha})$ corresponding to the class $C_i$. Analogously, the cardinality of the sets
$B_{\alpha}$ for $(A_\alpha, B_{\alpha})$ for the same equivalence class are the same.
We denote  by
$b_i$ the cardinality of $B_{\alpha}$ for $(A_\alpha, B_{\alpha})$
corresponding to the class $C_i$.

 By the assumption of the lemma, we know that $b_i \geqslant \lambda a_i$.
 Fix some $\alpha$ in this class $C_i$.  For each $\beta \in C_i$ fix $g_\beta$ such that $A_{\beta} = g_\beta A_\alpha$ and we assume that $g_\alpha=e$.
For each point $h \in A_\alpha$, we can consider its images by translations
 $g_{\beta} h$, for all $\beta \in C_i$.
We denote this set of points $G_i^h$.
Fix some $h \in A_\alpha$, put $G_i= G_i^h$ and consider $M_i^G = \mu(G_i)$.
Observe that for each $h' \in A_{\alpha}$, we have 
$G_i^{h'} = 
\{g_{\beta} h h^{-1}h'\} = G_i h^{-1}h'$.
Since the right-invariant mean $\mu$ is finitely additive and since $A_\alpha$ is finite, we can 
therefore conclude that  $M_i^A = a_i M_i^G$ and $M_i^B = b_i M_i^G$.
 Hence we know that 
$$
1 \geqslant \mu (\bigsqcup_{\alpha \in X}B_{\alpha}) =
\sum_{i = 1}^{m} M^B_i = \sum_{i = 1}^{m} b_i M^G_i \geqslant
$$
 
$$
\geqslant \lambda \sum_{i = 1}^{m} a_i M^G_i = \lambda \sum_{i = 1}^{m} M^A_i = \lambda \cdot \mu (\bigsqcup_{i = 1}^{\infty} A_i),
$$
 and this completes the proof of the lemma.
\end{proof}

Now we  prove  Theorem \ref{pr:dimtoisodiam}.

Fix $n\ge 1$.
We use the definition of the control function  for the asymptotic dimension for $r=2n$.
We know that  we can color the elements of 
 $G$ in  $d+1$ colors and partition the elements of each color into parts such that the following holds. The distance between distinct parts of the same color is greater than $2n$,
 and each part has diameter at most $f(2n)$.

By the assumption of the theorem,  $G$ is amenable. We fix a right-invariant mean on the subsets of $G$.
Observe that there exists at least one color with the mean of the union of parts
of this color
 $\geqslant \frac{1}{d+1}$. Choose one  such color, and 
  denote the parts of this color by $A_{\alpha}$, $\alpha \in X$, here $X$ is either a countable or finite set.

By our assumption on their diameter, we know that the 
   $n$-neighborhoods  $B(A_{\alpha}, n)$ are pairwise non-intersecting.

We apply Lemma \ref{lem:lemma1} to the parts  $A_{\alpha}$ of the fixed color, defined
above,
and to their neighborhoods  $B_{\alpha}=B(A_{\alpha}, n)$, $n \in \mathbb{N}$.
We conclude that for some $\alpha$, it holds $\#B(A_{\alpha},n) \leqslant (d+1)\#A_\alpha$.

We  put $F'_n = A_{\alpha}$ and $F_n = B(A_\alpha, n)$.
We know that for all $n$
$$
\frac{\# F_n}{\# F'_{n}} \le d+1,
$$
and thus $(F'_n, F_n)$ satisfies the first property in the definition of F{\o}lner couples (see Definition \ref{defn:couples}).

Since, by construction, $F_n$ is a $n$-neighborhood of $F'_n$, it is clear that the distance 
$$
 d (F_n', G \setminus F_n) \ge n,
$$
and thus $(F'_n, F_n)$ also satisfies the second property of Definition \ref{defn:couples}. Therefore  $(F'_n, F_n)$ forms indeed a sequence  of F{\o}lner couples.

Finally, observe that the diameter of $F_n = B(A_\alpha, n)$ is at most $2n$ plus
the diameter of $A_{\alpha}$, and hence the diameter of $F_n$ is at most
$f(2n)+2n$. This completes the proof of the Theorem.
 
\end{proof}

\section{Proof of Theorem A. Properties of groups of finite Assouad-Nagata dimension}

In a particular case when  not only asymptotic dimension but also $AN$-dimension of $G$ is finite, Theorem \ref{pr:dimtoisodiam} tells us that
the group admits controlled F{\o}lner sets. Indeed, if the control function is at most $Kn$, then the group $G$ admits F{\o}lner couples of diameter at most $(K+2)n$. And controlled F{\o}lner pairs in a group imply property $H_{\rm FD}$ of Shalom, as it was shown
in \cite{erschlerzheng} using the previous criterion of \cite{erschlerozawa}.
We remind below this conclusion and also explain  that Theorem \ref{pr:dimtoisodiam} and previously known implications between smallness conditions,  we can conclude that amenable groups of finite $AN$-dimension
also satisfy the following "smallness" conditions.

Such known implications between smallness conditions are shown in Figure \ref{pic:smalnessconditions}. Thick arrows at the right side of the picture correspond to our proof of Theorem A. The first arrow in this path of rays is a particular case,
under an extra assumption that $AN$-dimension is finite, of the claim of our Theorem  3.1. 
Other thick arrows indicate already known results needed for the conclusion of Theorem A.

\begin{corol}
Let $G$ be a finitely generated amenable group of finite $AN$-dimension.
Then
\begin{enumerate}

\item $l_p$-profile of $(G,S)$ inside the balls is small, that is,  for all $r$ we have
$$
\lambda_{S,p}(B(e, r)) \le Cr^{-p}
$$
\item Simple random walks on $G$ are totally cautious.

\item The return probability of simple random walks on $G$ is large. That is, for all $n$
it holds
$p_n(e,e) \ge \exp(- {\rm Const}n^{1/3})$.

\item Simple random walks on $G$ are diffusive.
\end{enumerate}

\end{corol}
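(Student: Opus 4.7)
The plan is to derive each of the four conclusions from the controlled F{\o}lner couples given by Theorem~\ref{pr:dimtoisodiam}. When the $AN$-dimension is finite the asymptotic-dimension control function is linear, so Theorem~\ref{pr:dimtoisodiam} produces F{\o}lner couples $(F_n', F_n)$ with $\#F_n \le (d+1)\#F_n'$, $d_S(F_n', G\setminus F_n) \ge n$, and $\diam(F_n) \le Ln$ for a constant $L$. After left-translating so that $e \in F_n'$, we may assume $F_n \subset B(e, Ln)$.

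For (1), I would test the $l_p$-profile against the truncated-distance function
\[
f_n(x) = \min\bigl(d_S(x,\, G\setminus F_n),\, n\bigr),
\]
which is supported in $F_n \subset B(e, Ln)$, equals $n$ on $F_n'$, and is $1$-Lipschitz in $d_S$. Because $\mu$ has finite support, only $x$ within a bounded neighborhood of $F_n$ contributes to the Dirichlet sum, so $\tfrac12\sum_{x,y}|f_n(x)-f_n(xy)|^p\mu(y) \le C_1 \#F_n$, while $\|f_n\|_p^p \ge n^p \#F_n' \ge n^p \#F_n/(d+1)$. Normalizing yields $\lambda_{S,p}(B(e,Ln)) \le C_2/n^p$, and since $\lambda_{S,p}(B(e,\cdot))$ is non-increasing in the radius, this extends to $\lambda_{S,p}(B(e,r)) \le C r^{-p}$ for all $r\ge 1$.

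For items (2)--(4), I would invoke the now-standard dictionary converting controlled F{\o}lner couples into random-walk smallness. Total cautiousness of $(G,\mu)$ follows from Lemma~4.5 of \cite{erschlerzheng} combined with the $l_2$-profile bound of (1) at $p=2$; the fact that the F{\o}lner couples of Theorem~\ref{pr:dimtoisodiam} exist at every scale $n$, not merely along a subsequence, is precisely what upgrades cautiousness to its total form. The return-probability estimate in (3) is the Coulhon--Grigoryan--Pittet bound from \cite{coulhongrigoryanpittet}: a sequence of F{\o}lner couples with linear diameter in a group of at most exponential growth forces $p_n(e,e) \ge \exp(-\mathrm{const}\cdot n^{1/3})$, the exponent $1/3$ being the universal floor for groups of exponential growth. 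Diffusivity (4) is then immediate from total cautiousness, since a walk that stays inside a ball of radius $\varepsilon\sqrt{n}$ with uniformly positive probability cannot have expected displacement essentially larger than $\sqrt{n}$.

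The main obstacle here is bookkeeping rather than conceptual: each cited implication must be checked to hold for every $n$ rather than along a subsequence, and uniformly in the choice of symmetric, finitely supported generating measure $\mu$. Both requirements are met because Theorem~\ref{pr:dimtoisodiam} produces controlled F{\o}lner couples at every integer scale, and the test function above depends on $\mu$ only through the radius of its support.
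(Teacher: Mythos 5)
Your treatment of items (1)--(3) matches the paper's route. For (1) you unfold the standard test-function argument (the truncated distance to $G\setminus F_n$) instead of citing Tessera's Proposition 4.9 in \cite{tesseracontrolled}, which is exactly the content of that proposition and is correct; (2) and (3) cite the same results (Lemma 4.5 of \cite{erschlerzheng} and Theorem 4.8 of \cite{coulhongrigoryanpittet}) that the paper uses, and your remarks about needing the estimates at every scale $n$ and about the at-most-exponential growth are the right caveats.

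There is, however, a genuine gap in item (4). You claim that diffusivity is ``immediate from total cautiousness, since a walk that stays inside a ball of radius $\varepsilon\sqrt{n}$ with uniformly positive probability cannot have expected displacement essentially larger than $\sqrt{n}$.'' This inference is false: total cautiousness only guarantees that the event $\{\max_{k\le n} l(W_k)\le \varepsilon\sqrt{n}\}$ has probability at least $p_\varepsilon>0$, and says nothing about the walk on the complementary event, which may have probability close to $1$ and on which $l(W_n)$ could be of order $n$; the expectation $L(n)=E[l(W_n)]$ could then be far larger than $\sqrt{n}$. Indeed, the paper explicitly records as an open question whether the smallness conditions in its diagram (cautiousness and diffusivity among them) are equivalent, so no such implication is available. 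The paper instead derives (4) from (1): the $l_2$-profile bound $\lambda_{S,2}(B(e,r))\le Cr^{-2}$ yields the diffusive bound $L(n)\le C\sqrt{n}$ by Theorem 1.6 of \cite{pereszheng} (applied with $\alpha=1$, $\theta=2$), a nontrivial result converting slow heat-kernel decay into drift upper bounds. You should replace your one-line deduction of (4) by this citation (or an equivalent argument); as written, that step does not hold.
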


A comment about return probability: in the proof below we will use the fact that the growth function is at most exponential. If the growth is subexponential,  the lower bound can be improved in terms of growth.

\begin{proof} 

1)  From Theorem 
\ref{pr:dimtoisodiam} we see that in case of a linear control function, we have F{\o}lner couples inside balls of linear diameter. In other words, we have controlled F{\o}lner couples. And
controlled  F{\o}lner couples imply the upper bound of $l_2$-profile inside the balls (of linear radius),
see \cite{tesseracontrolled}, Proposition 4.9.

2) We need to explain the result of \cite{erschlerzheng}
that estimates of 1) for  the $l_2$-profile imply the total cautiousness estimates.
The statement of  Lemma 4.5 in \cite{erschlerzheng} affirms cautiousness.  But in fact, the proof of that lemma, in the case when we have estimates for $l_2$-profile inside the balls of radius $r$, for all $r$, shows total cautiousness.

 Indeed, in the last line of the proof
of Lemma 4.5 in \cite{erschlerzheng} it was proved the following.
Assume that $(r_i)$ is a sequence of numbers such that for all $i\in \mathbb{N}$ the $l_2$-profile inside the balls  satisfies 
$$
\lambda_{S,2}(B(e, r_i)) \le Cr_i^{-2}.
$$
Then for any $c>0$
and $n_i =  (r_i/c)^2$
the following holds: the probability that the random walk at positions $W_k$, $k: 1 \le k \le n_i$,  stays in the ball of radius $2c\sqrt{n_i}$
satisfies
\[
\mathbb{P} \left( \max_{k \leq n_i} l_S (W_k) \leq 2c\sqrt{n_i} \right) 
\geq \delta(c, C) > 0.
\]
 
 Putting $r_i=i$ we can conclude the total cautiousness of our random walk.

3) Estimates of return probability in the presence of F{\o}lner couples is a result
of \cite{coulhongrigoryanpittet}, Theorem 4.8.
As we have mentioned, this was the motivation for the notion 
of F{\o}lner pairs in that paper.

4) We show that 1) implies 4). We use 1) for $p=2$, so that we know the estimates of $l_2$-profile.
Then the diffusive upper bounds for the drift function $L(n)$ in a particular case of the claim of Theorem 1.6 \cite{pereszheng}, for
$\alpha=1$ and $\theta=2$ in the formulation of that theorem.

\end{proof}

\begin{question*}
Are  "smallness conditions" in the ovals of Figure \ref{pic:smalnessconditions} equivalent? 
\end{question*}
To our knowledge, it is known for none of the pairs of distinct  properties mentioned in the question. As to  grey non-oval domains in the picture, these classes of groups are essentially larger than those mentioned in the ovals. And the class of groups with  Shalom's property
(depicted in the large grey domain at the bottom of the picture) is also essentially larger than groups with cautious simple random walks
(a smaller grey domain).

See also Question 6.1 \cite{erschlerzhengLLN} for a discussion of the equivalence of some of the  conditions on our picture, and some conditions not included in our picture. As we have mentioned, arrows between ovals signify that one property implies the other one. We also mention a recent result of Hutchcroft that shows that under some version of cautiousness (referred to by the author as a version of diffusiveness), large return probability is equivalent to the small growth of the F{\o}lner function. 
While many properties in ovals describe smallness of well-known invariants of groups, one of the properties  speaks
about "snakes", a sequence of points in  an ordered metric space,
which alternates between $r$-neighborhoods of two points.
A recent invariant introduced in \cite{ErschlerMitrofanov2}
speaks about a possibility to order the space so that it does not admit a sequence of snakes with the ratio of diameter to $r$ tending to infinity.
This property is shown in that \cite{ErschlerMitrofanov2} to be a corollary of finite $AN$-dimension.

As to Shalom's property $H_{FD}$, depicted as a large area in the bottom of the picture, we mention that the class of amenable groups with $H_{FD}$
is much larger than these classes of "small groups". Using two different constructions, it is shown in \cite{brieusselzhengHFD} and \cite{erschlerzheng}
that groups with Shalom's property can have arbitrarily large F{\o}lner sets, and thus arbitrarily quick subexponential probability of return to the origin; and an  arbitrarily large sublinear drift. It is also shown in \cite{brieusselzhengHFD} that groups with this property can contain as a subgroup a sum of infinitely many copies of $\mathbb{Z}$ (see Proposition 3.6 of \cite{brieusselzhengHFD} and its proof), and thus infinite $AN$-dimension, and even infinite asymptotic dimension.

\section{Remarks and open questions} \label{sec:questions}

To have a  positive answer to  Question A, one  needs to exclude in particular a possibility to have groups of intermediate growth among groups of finite $AN$-dimension. 
Even in this formulation, it seems to be a very difficult challenge.
In this section, we discuss  known results  and open questions about the dimension of groups on intermediate growth and the dimension of non-elementary amenable groups.

Grigorchuk's Gap conjecture for isoperimetry states that	F{\o}lner function of any finitely generated group is either polynomial or at least exponential, see \cite{grigorchuksurvey}, Conjecture 12.3 (ii). This conjecture was also formulated in an earlier unpublished manuscript (2000) of Grigorchuk and Pansu. 

\begin{claim}   \label{claim:fromGrigorchuk} 
If Grigorchuk-Pansu's Gap conjecture for isoperimetry holds true, then  each group $G$ of intermediate growth has infinite $AN$-dimension.
\end{claim}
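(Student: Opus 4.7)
The plan is to argue by contradiction. Suppose that $G$ has intermediate growth and finite $AN$-dimension. Since subexponential growth implies amenability (F{\o}lner's criterion), $G$ is amenable, so Theorem \ref{pr:dimtoisodiam} applies with a linear control function $f(n) = Kn$. This yields a sequence of F{\o}lner couples $(F'_n, F_n)$ with $\diam(F_n) \leqslant (K+2)n$; in particular $F_n$ is a F{\o}lner set (with ratio comparable to $1/n$) contained in a ball of linear radius. Consequently the F{\o}lner function of $G$ satisfies
\[
\fol_G(n) \leqslant \#F_{cn} \leqslant v_G((K+2)cn)
\]
for some constant $c$ depending on the generating set. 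Since $G$ is of intermediate growth, $v_G$ is subexponential, hence $\fol_G$ is subexponential as well.

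Next, I would invoke the Coulhon--Saloff-Coste isoperimetric inequality, which gives a universal lower bound
\[
\frac{\#\partial A}{\#A} \geqslant \frac{c'}{\rho_G(2\#A)},
\]
where $\rho_G$ is the inverse of the growth function $v_G$. Applying this to a set $A$ with $\#\partial A/\#A \leqslant 1/n$ yields $\#A \geqslant \tfrac{1}{2} v_G(c'' n)$, and therefore
\[
\fol_G(n) \geqslant \tfrac{1}{2}\, v_G(c'' n).
\]
Since $G$ has superpolynomial growth, this lower bound is superpolynomial, so $\fol_G$ is itself superpolynomial.

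Combining the two estimates, $\fol_G(n)$ lies strictly between polynomial and exponential: it is superpolynomial (from Coulhon--Saloff-Coste applied to the superpolynomial growth) and subexponential (from controlled F{\o}lner sets and subexponential growth). This directly contradicts the Grigorchuk--Pansu Gap conjecture for isoperimetry, which forces $\fol_G$ to be polynomial or at least exponential. Hence the assumption of finite $AN$-dimension must fail, proving the claim.

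The main obstacle is not really technical — all ingredients are classical — but rather just making sure the bookkeeping between the two sides is clean: one must verify that the multiplicative constants hidden in ``controlled'' and in the Coulhon--Saloff-Coste inequality do not interfere with the dichotomy (polynomial vs. exponential is insensitive to linear rescaling of the argument and to polynomial multiplicative factors, so no issue arises here). A secondary point worth double-checking is that the F{\o}lner function built from the couples of Theorem \ref{pr:dimtoisodiam} gives the correct asymptotic of the genuine F{\o}lner function up to the usual change of variable $n \mapsto cn$, which is immediate from Definition \ref{defn:couples}.
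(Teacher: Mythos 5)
Your proof is correct and follows essentially the same route as the paper's: both derive controlled F{\o}lner sets from finite $AN$-dimension (the paper cites Nowak's Theorem 5.3 where you invoke Theorem \ref{pr:dimtoisodiam}), bound the F{\o}lner function above by the subexponential growth function, and contradict the Gap conjecture using the fact that the F{\o}lner function of a group of superpolynomial growth cannot be polynomial. The only cosmetic difference is that you make the Coulhon--Saloff-Coste lower bound explicit where the paper leaves it implicit.
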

\begin{proof} 
If this conjecture holds true, then for any group  of intermediate growth, the F{\o}lner function is asymptotically strictly larger than the growth function. Thus under the assumption of the claim we know that $G$ does not admit controlled F{\o}lner sets. 
On the other hand, if the $AN$-dimension  of $G$ is finite,
then Thm 5.3 in \cite{nowak} implies that $G$ admits F{\o}lner couples. We conclude therefore that the $AN$-dimension of $G$ is infinite.
\end{proof}

All known examples of groups of intermediate growth have infinite asymptotic dimension:
\begin{rem}\label{rem:detailedintermediate}
All known groups  $G$ of intermediate growth  satisfy the following.
For each $d\ge 1$ there exists an infinite finitely generated group $H$ such that  $G$ contains
$H^d$ as a subgroup.
To our knowledge, it is also the case for all known amenable, non-elementary amenable groups.
\end{rem}

For some groups, the group $H$ in the Remark above can be chosen to satisfy the claim for all $d$, as for example for the first Grigorchuk group. This group has an even stronger property of being self-similar, that is, $G$ is commensurable with $G^2$, and this self-similarity is used in \cite{smith} to claim infinite asymptotic dimension.
For some other $G$, the choice of group $H$ can depend on $d$; this is the case
for Grigorchuk groups $G_w$ \cite{grigorchukdegrees}, defined by one-sided infinite sequences of the symbols $0$, $1$, $2$, which 
have intermediate growth unless two symbols appear only a finite number of times. If $w$ is not periodic, the dependence of $H$ on $d$ is essential.
We also mention many interesting examples of self-similar groups, with rich families of groups of exponential growth, and also with families of groups of intermediate growth, can be found among IMG (iterated monodromy groups). Some of these groups have intermediate growth, and some are not elementarily amenable groups of exponential growth.
An intriguing class of groups of intermediate growth is constructed several years ago by Nekrashevych  \cite{nekrashevych} (such groups provide
his examples of simple groups of intermediate growth).
In contrast with Grigorchuk or IMG groups, where self-similarity or weak self-similarity comes from the action of
groups on a rooted tree, some groups of Nekrashevych are
simple, and in particular cannot act on a rooted tree.
But all such groups satisfy the assumption of the Claim \ref{rem:Hd},  and hence their  asymptotic dimension is infinite.

\begin{question} Does each group of intermediate growth have infinite asymptotic dimension?
\end{question}

Another question in this context is whether converse statements to Nowak's result
about groups of finite $AN$-dimension hold true:
\begin{question}
	Is it true that if the F{\o}lner function of a group of exponential growth  is exponential, then $AN$-dimension is finite?
    Is it true that if F{\o}lner function is equivalent to the growth function, then $AN$-dimension is finite?
\end{question}

If the answer to the above-mentioned Question 1 about groups of intermediate growth  is positive, then the latter can happen only if the group is of polynomial or exponential growth. We mention in this context that it seems interesting to know whether 
the  F{\o}lner function of Grigorchuk groups is strictly larger than exponential. So far it is only known that this function is  at least exponential \cite{erschlerzheng}. We also mention that a well-known gap problem of Grigorchuk asks whether the F{\o}lner function of any not virtually nilpotent group is at least exponential.

One can moreover ask: is it true that the F{\o}lner function of any group of 
intermediate growth is asymptotically strictly larger than exponential?

\begin{question}
	If the F{\o}lner function is at most exponential, is it true that $AN$-dimension coincides with the asymptotic dimension?
\end{question}

We know from Corollary \ref{corol:torsion} that it makes sense to ask the following 
only for non-amenable groups:
\begin{question}\label{question:burnside}
Does there exist an infinite
torsion group of finite $AN$-dimension?
\end{question}
To our knowledge, it is not known whether the asymptotic dimension or  $AN$-dimension of free Burnside groups is finite or infinite.

\end{document}